\documentclass[11pt]{amsart}

\usepackage{amsmath,amsthm,amssymb,mathrsfs,amsfonts,verbatim,color}
\usepackage[all,tips]{xy}
\usepackage{graphicx,ifpdf}
\ifpdf
   \DeclareGraphicsRule{*}{mps}{*}{}
\fi

\newtheorem{theorem}{Theorem}[section]
\newtheorem{lemma}[theorem]{Lemma}

\newtheorem{proposition}[theorem]{Proposition}

\theoremstyle{definition}

\newcommand{\SL}{\operatorname{SL}}
\newcommand{\GL}{\operatorname{GL}}
\newcommand\eq[2]{{\begin{equation}\label{eq:
#1}{#2}\end{equation}}}
\newcommand {\equ}[1] {\eqref{eq: #1}}

\newcommand{\df}{{\, \stackrel{\mathrm{def}}{=}\, }}
\newcommand\ssm{\smallsetminus}
\newcommand{\Q}{{\mathbb {Q}}}

\newcommand {\new}[1] {\textcolor{black}{#1}}

\theoremstyle{remark}
\newtheorem{remark}[theorem]{Remark}

\numberwithin{equation}{section}



\begin{document}

\title{$S$-adic version of  Minkowski's Geometry of numbers and Mahler's compactness criterion}

\author{Dmitry Kleinbock}
\address{Department of Mathematics, Brandeis University, Waltham, MA 02454-9110, USA}
\email{kleinboc@brandeis.edu}

\author{Ronggang Shi}
\address{School of Mathematical Sciences, Xiamen University, Xiamen 361005, PR China}
 \email{ronggang@xmu.edu.cn}
\thanks{The first-named author was supported by NSF grant  DMS-1600814. The second-named author is supported  by  Fundamental Research Funds for the Central Universities (Grant No. 20720160006). The third named author acknowledges partial support by IMI of BAS}

\author{George Tomanov}
\address{Institut Girard Desargues, Universit\'e Claude Bernard-Lyon 1, 69622 Villeurbanne cedex, France}
\email{Georges.Tomanov@desargues.univ-lyon1.fr}

\subjclass[2000]{Primary 11H06; Secondary 22E40}

\date{}


\keywords{Successive minima, Mahler's criterion, Homogeneous space}

\begin{abstract}
\noindent
In this note  we give a detailed proof of certain results on geometry of numbers in
the $S$-adic case. These results are well-known to experts, so the aim here is to provide a
convenient reference for the people who need to use them.
\end{abstract}

\maketitle

\markright{}

\section{Introduction}
The space  of unimodular   lattices in $\mathbb R^n$ $(n\ge 2)$  can be identified with the homogeneous
space $X=\SL_n(\mathbb Z)
\backslash \SL_n(\mathbb R)$ via the correspondence  $\mathbb Z^ng\leftrightarrow \SL_n(\mathbb Z)g$ where $g\in \SL_n(\mathbb R)$. It is proved by Mahler \cite{M2} that a subset $R$ of
$X$ is relatively compact if and only if nonzero elements of
  the corresponding unimodular lattices are separated from zero.
This phenomenon is called Mahler's compactness criterion \new{\cite[Chapter V]{CasselsGN}. It has been very  useful in  dynamical
approach to number theory; we} refer the readers to survey papers \cite{E},\cite{EL} and \cite{KSS} and references
there for details.

Let  $S$ be a finite nonempty set of
places of a global field $K$. We assume $S$ contains all the archimedean places if $K$
is a number field.  For each place $v$ of $K$, let $K_v$ be the completion of $K$ at $v$.
Let  $K_S=\prod_{v\in S}K_v$  and
\begin{equation}\label{eq: integer}
I _S=\{a\in K: a \mbox{ is integral in } K_v \mbox{ for every place } v\not \in S\}.
\end{equation}
We consider $K$ and hence $I_S$ as subrings of $K_S$ via natural embeddings $K\to K_v$.
Then the homogeneous space $\SL_n(I_S)\backslash \SL_n(K_S)$ can be identified with a  set  of
free discrete $I_S$-modules  of rank $n $ in $K_S^n$ with fixed covolume.
The connection between dynamics and number theory also  spreads to the  $S$-adic setting,
where the corresponding version of Mahler's criterion plays an important role.
The extension of Mahler's criterion to the $S$-adic case when $K$ is a number field has already been
 used
 in several papers, and a proof for $K=\mathbb Q$  can be found in \cite{KT}. \new{Moreover, a preliminary version \cite{KTMP}
 	of the paper  \cite{KT}, published as a preprint of MPIM (Bonn), contains a proof of the $S$-adic Mahler's criterion for arbitrary number field $K$.}
	When $K$ is a function field with genus zero and $S$ contains a single place of degree one, Mahler's criterion is proved  in \cite{G}.
The general $S$-adic case is  known to experts, but  it is not easy to   find
 a convenient reference.
Here we provide a self-contained proof of an $S$-adic version of  Mahler's criterion.
\begin{theorem}\label{thm: intro main}
	Let $n\ge 2$.
A set $R\subset \SL_n(I_S)
\backslash \SL_n(K_S)$ is relatively compact if and only if the  subset
\[
\big\{\xi\in I_S^ng: \xi \neq  0,\  g\in \SL_n(K_S) \mbox { and } \SL_n(I_S)g \in R\big\}
\]
of  $K_S^n$
is separated from zero, i.e.~this set has empty intersection with
some open neighborhood of zero in $K_S^n$.
\end{theorem}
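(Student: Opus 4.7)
The plan is to prove the two implications separately, following the template of the classical Mahler criterion. The ``only if'' direction is soft, using only that $I_S^n$ is discrete in $K_S^n$. The ``if'' direction is the substantive one; it will use the $S$-adic Minkowski theorem on successive minima established earlier in the paper, followed by a reduction-theoretic step which converts a bound on successive minima into a bound on a coset representative.

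For the ``only if'' direction, I would argue by contradiction. Assume $R$ is relatively compact but the separation hypothesis fails, so there are $g_k\in SL_n(K_S)$ with $SL_n(I_S)g_k\in R$ and nonzero $v_k\in I_S^n$ with $v_k g_k\to 0$. By relative compactness, after replacing each $g_k$ by $\gamma_k g_k$ for suitable $\gamma_k\in SL_n(I_S)$ (and simultaneously $v_k$ by $v_k\gamma_k^{-1}$, which is still a nonzero element of $I_S^n$ because $I_S^n\gamma=I_S^n$ for every $\gamma\in SL_n(I_S)$), I may pass to a subsequence with $g_k\to g$ in $SL_n(K_S)$. Then $v_k=(v_kg_k)g_k^{-1}\to 0$ in $K_S^n$, which contradicts the fact that $I_S^n$ is a (cocompact, hence in particular discrete) lattice in $K_S^n$.

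For the ``if'' direction, let $R$ satisfy the separation hypothesis. For each $g$ with $SL_n(I_S)g\in R$, consider the discrete $I_S$-submodule $L_g:=I_S^n g\subset K_S^n$; this module depends only on the coset $SL_n(I_S)g$. Using a natural one-parameter family of symmetric ``convex'' bodies in $K_S^n$, define the successive minima $\lambda_1(L_g)\leq\cdots\leq\lambda_n(L_g)$ as in the earlier sections of the paper. The separation hypothesis supplies a uniform lower bound $\lambda_1(L_g)\geq c>0$. Since $g\in SL_n(K_S)$ the covolume of $L_g$ is constant on $R$, and the $S$-adic Minkowski second theorem bounds $\prod_i\lambda_i(L_g)$ by a constant multiple of this covolume, so $\lambda_n(L_g)\leq C$ uniformly. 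It then remains to produce $\gamma\in SL_n(I_S)$ for which $\gamma g$ lies in a bounded subset of $SL_n(K_S)$ depending only on $c$ and $C$; this is the $S$-adic analogue of the fact that a lattice with successive minima comparable to $1$ has a system of generators of bounded norm, and gives relative compactness of $R$.

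The principal obstacle is the last step, the passage from bounded successive minima to a bounded coset representative. In the classical $K=\mathbb Q$, $S=\{\infty\}$ case, one uses that the $\lambda_i$ are realized, up to dimensional constants, by an honest $\mathbb Z$-basis of the lattice. Over a general ring of $S$-integers this is delicate, since $I_S$ need not be a principal ideal domain and a rank $n$ submodule of $I_S^n$ need not be free; one must either argue with pseudo-bases in the sense of the structure theorem for projective modules over Dedekind domains, or combine place-by-place estimates with strong approximation for $SL_n$ to upgrade a generating set into an honest element of $SL_n(I_S)$. This is the step where the flexibility built into the $S$-adic Minkowski theory of the previous sections will be decisive.
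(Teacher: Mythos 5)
Your ``only if'' direction is correct and is essentially the paper's argument: both rest on choosing coset representatives in a relatively compact subset of $SL_n(K_S)$ and then invoking the discreteness of $I_S^n$ in $K_S^n$ (your phrasing is by contradiction, the paper's is a direct norm bound via $\|\xi g^{-1}\|\le C\|\xi\|$, but these are the same idea). Your setup for the ``if'' direction is also the right one: from the separation hypothesis and Theorem~\ref{thm: Minkowski} one gets uniform bounds $\epsilon\le\iota_1\le\iota_n\le 1/\epsilon$ on the successive minima over all cosets in $R$.

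The gap is exactly where you say it is: you never carry out the passage from uniformly bounded successive minima to a bounded coset representative in $SL_n(I_S)\backslash SL_n(K_S)$, and you leave it as an acknowledged ``principal obstacle,'' offering only two candidate strategies (pseudo-bases for projective modules over Dedekind domains, or strong approximation) without executing either. This is the whole content of the theorem beyond Minkowski, so as written the proof is incomplete. The paper resolves this step by a third route that sidesteps the non-free-module issue you flag. Given a sequence $[g_i]\in R$, pick $\gamma_1^{(i)},\dots,\gamma_n^{(i)}\in I_S^n$ whose images under $g_i$ realize the minima, and form the $I_S$-submodule $\Gamma_i=\mathbf{span}_{I_S}\{\gamma_j^{(i)}\}\subset I_S^n$. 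By the Minkowski bound, $\mathbf{cov}(\Gamma_i)=\mathbf{cov}(\Gamma_i g_i)$ is uniformly bounded, and Lemma~\ref{lem: finite} says there are only finitely many $I_S$-submodules of $I_S^n$ with bounded covolume; so after passing to a subsequence one may assume all $\Gamma_i=\Gamma=I_S^nh$ for a fixed $h\in GL_n(K)$. One then corrects $\tilde h_i=(\gamma_j^{(i)})_j$ by a diagonal determinant factor to obtain $h_i=f_ih$ with $f_i\in SL_n(I_S)$; the boundedness of the rows of $\tilde h_ig_i$ together with the two-sided bound on $|\mathbf{det}(\tilde h_i)|_v$ forces $h_ig_i$ into a compact subset of $GL_n(K_S)$, and a final commensurability argument between $h^{-1}SL_n(I_S)h$ and $SL_n(I_S)$ extracts a convergent subsequence of $[g_i]$. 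If you want to complete your write-up, either reproduce this finiteness-plus-commensurability argument or actually carry out your pseudo-basis idea; as it stands, the key implication is asserted rather than proved.
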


Our proof of  Theorem \ref{thm: intro main} is based on
 an $S$-adic version of  Minkowski's lemma of geometry  of numbers.
 Let $\mathbf{vol}$ be the normalized Haar measure on
the additive group $K_S^n$ (see \S \ref{sec: preliminaries}).
For a discrete $I_S$-module $\Gamma$ in $K_S^n$ the covolume of $\Gamma$
(denoted by $\mathbf{cov}(\Gamma)$) is the $\mathbf{vol}$ of a
fundamental domain of $\Gamma$ in $K_S^n$.
Let $B_r(K_S^n)$ be the closed  ball of radius $r$ centered at zero in $K_S^n$ with respect to the normalized  norm
(see \S \ref{sec: preliminaries}).
For each integer $1\le m\le n$, the $m$-th  minimum of a discrete $I_S$-module $\Gamma$ is defined by
\begin{equation}\label{eq: intro minimum}
\lambda_m(\Gamma)=\inf \{r>0: \mathbf{dim}_K\big(\mathbf{span}_K(B_r({K_S^n})\cap \Gamma)\big )\ge m\}.
\end{equation}
Here $\mathbf{span}_K$ is the $K$ linear span of a set  and $\mathbf{dim}_K$ is the dimension of a vector space over $K$.  Similar notations are used when $K$ is
replaced by other rings.
We remark here  that if $K=\mathbb Q$ and $S$ contains only the archimedean place, then we  get the usual concept of successive minima of
lattice points in $\mathbb R^n$.

For two nonnegative real numbers $s$ and $t$ the notation $s \asymp t$ means $C^{-1}s\le t\le Cs$ for some
constant $C\ge 1$.
Let $\sigma$ and $\tau$ be the number of real and complex
places of $K$ respectively.\footnote{If $K$ is function field we have $\sigma=\tau=0$.}
Let $\sharp S=\tau +\mathbf{card}(S)$
where  $\mathbf{card} $ denotes the cardinality of a set.
The $S$-adic version of  Minkowski's theorem on successive minima  (see  \cite[ Chapter IV, \S 1]{S} for the usual case)
is the following theorem.

\begin{theorem}\label{thm: Minkowski}
	
Let $n\ge 1$  and let   $\Gamma\subset K_S^n$ be a discrete $I_S$-module with finite covolume. Then
\[
 \big(\lambda_1(\Gamma)\dots \lambda_n(\Gamma)\big)^{\sharp S}\asymp \mathbf{cov}(\Gamma)
\]
where the implied constants  depend on $K,S$ and $n$.
\end{theorem}

A refine\new{d} version of Theorem \ref{thm: Minkowski} will be proved
in Theorem \ref{thm: refine} where the implied constants \new{will be} explicitly calculated.
If $K$ is a function field of genus zero and $S$ consists of  a single place  of degree one,
then Theorem \ref{thm: Minkowski} is established in \cite{M}. The adelic versions of Theorem
\ref{thm: Minkowski} \new{are} proved in \cite{BV}  (resp.~\cite{T})
  when $K$ is a   number field  (resp.~function field).



\section{Preliminaries: notations}\label{sec: preliminaries}

Let $ K$ be a global field and let $P$ be the set of places of $K$.
Throughout this paper we fix a
 positive integer $n$ and a
finite nonempty set
$S\subset P$  such that $S\supset P_0$ where   $P_0$ (possibly empty) is the set of  archimedean places of $K$.

For every $v\in P$ let $K_v$ be the completion of $K$ at $v$.
 The {\sl $S$-adic numbers} and {\sl integers} are defined  as
 \[
 K_S\new{\df}\prod_{v\in S} K_v\quad\mbox{and}\quad   I _S\new{\df}\{x\in K: x \mbox{ is integral for all } v\in P\ssm S\}
 \]
 respectively.
  We consider $ K$ as a subring of $K_S$ via the natural inclusions  $K\to K_v$.
 For $v\in P$, let $|\cdot|_v$ be the {\sl normalized absolute value}  on $K_v$.
 If $v$ is archimedean, 
 we identify $K_v$ with real or complex numbers where the usual absolute value is $|\cdot|_v$. If $v$ is ultrametric
 then $|a|_v^{-1}= \mathbf{card} (I_v/aI_v)$ for all $a\in I_v$ where
 $I_v$ is the ring of integers of $K_v$.
 For each ultrametric place $v\in P$ we fix a {\sl uniformizer} $\varpi_v$ (a generator of the maximal ideal
 of $I_v$) and take $q_v=|\varpi_v|^{-1}_v$.
We define the {\sl absolute value} and {\sl content} for $x=(x_v)_{v\in S}\in K_S$ respectively  by
 \[
 |x|\new{\df}\max_{v\in S} |x_v|_v \quad \mbox{and}\quad\mathbf {cont} (x)\new{\df}\prod_{v\in S}|x_v|_v^{\varepsilon_v}
 \]
 where $\varepsilon_v=2$ if $K_v=\mathbb C$ and $\varepsilon_v=1$ otherwise.

The additive group  $K_S^n $ can be naturally identified with $\prod_{v\in S} K_v^n$ and
we write every $\xi \in K_S^n$ as $(\xi_v)_{v\in S}$ according to this identification.
More precisely,  if $\xi= (x_1,\ldots, x_n)$ where $x_i=(x_{i,v})_{v\in S}$, then
$\xi_v=(x_{1,v}, \ldots, x_{n,v})$. Similarly,
the   group  $\GL_n(K_S)$ can be naturally identified with $\prod_{v\in S}\GL_n(K_v) $ and we write
 every  $g\in \GL_n(K_S)$
 as $(g_v)_{v\in S}$ according to this identification.
The group $\GL_n(K_v)$ (resp.~$\GL_n(K_S)$)  acts
on  $K_v^n$ (resp.~$K_S^n$) by matrix multiplication from the right.
Moreover, the action of $g\in \GL_n(K_S)$ on $\xi\in K_S^n$ is consistent with these identifications, that is,
$\xi g=  ( \xi_v g_v)_{v\in S}$ under previous notations.

For  $v\in P $ we take $\mathbf{vol}_{v}$ to be
the normalized Haar measure on
$K_v$. For archimedean $v$, the measure  $\mathbf{vol}_{v}$ is the Lebesgue measure.
 If $v$ is ultrametric, the measure
satisfies  $\mathbf {vol}_v (I_v^n)=1$.
It follows directly from definition that
\[
\mathbf{vol}_{v}(aB)=|a|_{v}^{n\varepsilon_v}\mathbf {vol }_{v}(B)
\]
for every $a\in K_v$ and {\new{any} measurable subset $B$ of $K_v^n$.
We take the  normalized Haar measure  $\mathbf{vol}$  on $K_S^n $
to be the
product measure.     In the sequel we  will abbreviate  $\mathrm{d}\mathbf {vol}(\xi)$  by $\mathrm{d}\xi$ for the integration
   with respect to the volume measure.
   For a positive integer $m $, we use
   $\mathbf{vol}_v^m$ and $\mathbf{vol}^m$  to denote the normalized Haar measures on
   $K_v^m$ and $K_S^m$ respectively.

 If $v$ is  archimedean  we take
 $\|\cdot\|_v$  to be the Euclidean norm on   $K_v^n$.
If $v$ is ultrametric we take
$\|\cdot\|_v$ to be the sup norm with respect to coordinates, that is
\[
\|(a_1,\dots, a_n)\|_v\new{\df}\max_{1\le i\le n}|a_i|_v\quad \mbox{where}\quad a_i\in K_v.
\]
We define the {\sl norm} and {\sl content}  for  $\xi\in K_S^n$ by
\[
\|\xi\|\new{\df}\max_{v\in S}\|\xi_v\|_v\quad
\mbox{and}\quad \mathbf{cont}(\xi)\new{\df}\prod_{v\in S}\|\xi_v\|_v^{\varepsilon_v}.
\]
For  $a\in I_S^*$, where $I_S^*$ is the group of multiplicatively invertible elements  of $I_S$,  we
have that
 $\mathbf{cont}(a)=1$ and
$\mathbf{cont}(a\xi)=\mathbf{cont}(\xi)$ where $\xi\in K_S^n$. Also for every $g\in \GL_n(K_S)$ we have
 \begin{align}\label{eq;det}
 \mathrm{d}(\xi g)=\mathbf{cont}\big(\mathbf{det}(g)\big)\,\mathrm{d}\xi,
 \end{align}
  where $\mathbf{det}$ is the determinant of a matrix.

The set of  vectors in $K_S^n$ (resp.~$K_v^n$) with norm less than or equal to $r$ is denoted by $B_r(K_S^n)$  (resp.~$B_r(K_v^n)$).
It can be checked directly  that
\begin{align}\label{eq;ball}
B_r(K_S^n)=\prod_{v\in S} B_r(K_v^n).
\end{align}
Let $L$ be a free $K_S$-submodule of $K_S^n$ with rank $m\le n$. Then
\begin{align}\label{eq;decomposition}
L=\prod_{v\in S} L_v,
\end{align} where $L_v$
is an $m$-dimensional subspace of $K_v^n$.
There is a unique additive Haar measure
$\mathbf{vol}_L$
on $L$ (resp.~$\mathbf{vol}_{L_v}$ on $L_v$) such that
\begin{align*}
\mathbf {vol}_L\big(L\cap B_1(K_S^n)\big)&=\mathbf {vol}^{m}\big(B_1(K_S^m)\big) \\
 (\mbox{resp.}\quad \mathbf {vol}_{L_v}\big(L\cap B_1(K_v^n)\big)&=\mathbf {vol}^m_{v}\big(B_1(K_v^m)\big)\ ).
\end{align*}
Moreover, the above definition, (\ref{eq;ball}) and (\ref{eq;decomposition})  imply
 \begin{align}\label{eq;measure}
 \mathbf {vol}_L=\prod_{v\in S}\mathbf {vol}_{L_v}.
\end{align}
 In the case where
$K=\mathbb Q$ and $\mathbf{card} (S)=1$, the measure $\mathbf{vol}_L$ is the measure given by the inner product on $L$.
Suppose $\xi=(\xi_v)_{v\in S}$ and $\xi_v \neq 0$; \new{then}  the covolume of $I_S \xi$ ($I_S$-linear span of $\xi$) in $K_S \xi$ ($K_S$-linear span of $\xi$)
with respect to $\mathbf{vol}_{K_S \xi}$ is equal to
$\mathbf{cont}(\xi) $ multiplied by the covolume of $I_S$ in $K_S$.
The covolume of
 a   discrete $I_S$-module  $\Gamma$ in  $K_S^n$
 with respect to
the induced measure $\mathbf{vol}_{K_S\Gamma}$ is called {\sl relative covolume} of $\Gamma$  and
it is denoted by
$\mathbf{cov_r}(\Gamma)$.  The covolume of  $\Gamma$ with respect to $\mathbf{vol}$ is denoted by
$\mathbf{cov}(\Gamma)$.


\section{Discrete $I_S$-modules}\label{sec: discrete}
Let $\Gamma\subset K_S^n$ be a discrete $I_S$-module.
In this section, we use ideas of
 \cite[\S 8]{KT}  to study properties of $\Gamma$.

\begin{lemma}\label{lem: discrete module}
Let $\Gamma\subset K_S^n$ be a discrete  $ I _S$-module
 and let $ \xi_1,\dots,  \xi_m\in \Gamma$. The following statements are equivalent:
\begin{enumerate}
\item  $\xi_1,\dots, \xi_m$ are linearly independent over $ I_S$;
\item $\xi_1,\dots, \xi_m$ are linearly independent over $ K$;
\item $\xi_1,\dots, \xi_m$ are linearly independent over $ K_S$.
\end{enumerate}
\end{lemma}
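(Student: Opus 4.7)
The implications $(3)\Rightarrow(2)\Rightarrow(1)$ are immediate from the chain $I_S\subset K\subset K_S$. For $(1)\Rightarrow(2)$, the plan is to clear denominators: given any $K$-linear dependence $\sum a_i\xi_i=0$ with $a_i\in K$, since $K$ is the fraction field of $I_S$ there exists $d\in I_S\setminus\{0\}$ with $da_i\in I_S$ for every $i$, and then $\sum(da_i)\xi_i=0$ is a nontrivial $I_S$-linear dependence. This proves the contrapositive of $(1)\Rightarrow(2)$.

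The substantive step is $(2)\Rightarrow(3)$. I would consider the continuous $K_S$-module homomorphism
\[
\phi\colon K_S^m\to K_S^n,\qquad \phi(c_1,\dots,c_m)=\sum_{i=1}^m c_i\xi_i.
\]
Because $\phi$ acts componentwise, $\ker\phi=\prod_{v\in S}V_v$ and $W:=\phi(K_S^m)=\prod_{v\in S}W_v$ with $V_v=\ker\phi_v$, $W_v=\phi_v(K_v^m)\subset K_v^n$; write $k_v=\dim_{K_v}W_v$. Assuming $(2)$, the already-proved $(2)\Rightarrow(1)$ yields $\phi|_{I_S^m}$ injective, so $M:=\phi(I_S^m)=\sum I_S\xi_i\subset\Gamma$ is in bijection with $I_S^m$. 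Suppose toward contradiction that $(3)$ fails; then $\ker\phi\neq0$, so by rank-nullity at each place, $\sum_v k_v\le m|S|-1$.

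I would then derive a contradiction from a Minkowski-style lattice-point count. Since $I_S^m$ is a cocompact lattice in $K_S^m$ and the normalized Haar volume satisfies $\mathbf{vol}(B_N(K_S^m))\asymp N^{m|S|}$, the number of $I_S^m$-points in $B_N(K_S^m)$ is $\asymp N^{m|S|}$. On the other hand, continuity of $\phi$ gives a constant $C>0$ with $\phi(B_N(K_S^m))\subset B_{CN}(K_S^n)\cap W$, and since $M$ is discrete in $K_S^n$, hence in $W$ with the subspace topology, a packing argument against the intrinsic Haar measure on $W$ (whose $R$-balls have volume $\asymp R^{\sum_v k_v}$) yields
\[
\#\bigl(M\cap(B_{CN}(K_S^n)\cap W)\bigr)\ll N^{\sum_v k_v}\le N^{m|S|-1}.
\]
For $N$ sufficiently large this quantity is strictly smaller than $\#(I_S^m\cap B_N(K_S^m))$, contradicting the bijectivity of $\phi|_{I_S^m}$.

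The main obstacle is justifying the volume estimates in the two different ambient spaces: one must check, place by place and using the normalizations of Section~\ref{sec: preliminaries} (in particular the squared Euclidean norm at complex places and the sup-norm at non-archimedean ones), that $R$-balls in a $k_v$-dimensional $K_v$-subspace of $K_v^n$ have polynomial volume with exponent $k_v$, and then take the product over $v\in S$ to arrive at the exponent $\sum_v k_v$ that drives the pigeonhole contradiction.
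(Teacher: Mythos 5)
Your proof is correct but takes a genuinely different route from the paper's. The paper proves $(1)\Rightarrow(3)$ directly by induction on $m$: for the base case it invokes the strong approximation theorem to produce a sequence $c_i\in I_S$ that shrinks at every place of $S$ except one fixed $w$, so that if $\xi_{1,w}=0$ then $c_i\xi_1\to 0$, contradicting discreteness; the inductive step normalizes $\xi_1=(1,0,\dots,0)$ via a $K_S$-linear automorphism and passes to the quotient $K_S^n/K_S\xi_1\cong K_S^{n-1}$, in which the image of $\Gamma$ is again discrete. You instead split off the easy $(1)\Rightarrow(2)$ by clearing denominators and replace the hard step by a volume/pigeonhole comparison: if the $\xi_i$ were $K_S$-dependent, the span $W=\prod_v W_v$ would have total dimension $\sum_v k_v\le m|S|-1$, so the discrete module $M=\sum I_S\xi_i\subset W$ could hold only $\ll N^{\sum k_v}$ points in a ball of radius $CN$, too few to contain the image of the $\gg N^{m|S|}$ points of $I_S^m$ in $B_N(K_S^m)$ under the bounded injection $\phi$. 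Both arguments are sound; the paper's is shorter and more elementary, needing only strong approximation and no volume asymptotics, while yours is non-inductive and makes the dimension-count mechanism explicit. The details you flag as the main obstacle (intrinsic $R$-balls in a $k_v$-dimensional $K_v$-subspace have volume $\asymp R^{k_v}$ under the paper's normalizations, and a discrete $I_S$-submodule is uniformly separated so packing applies) do go through; the one step worth spelling out is the lower bound $\#(I_S^m\cap B_N)\gg N^{m|S|}$, which follows from the Blichfeldt-type estimate of Lemma~\ref{lem: estimate lattice} together with the quasi-triangle inequality to recenter the translated ball at the origin.
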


\begin{proof}
It suffices to  show that (1) implies (3).  We prove it by
 induction on
$m$.
Write $\xi_i=(\xi_{i,v})_{v\in S}$ as in \S \ref{sec: preliminaries}. Suppose that
 $\xi_1$ is linearly dependent over $K_S$, then there exists $w\in S$ such that $\xi_{1,w}=0$.
According to the strong approximation theorem (see \cite[Chapter II \S 15]{CF}), there is a sequence
 $\{ c_i \}_{i\ge 1}$ of $  I_S\smallsetminus \{0 \}$
such that $|c_{i}|_v\to 0$ as $i\to \infty$ for any $v\in S\smallsetminus \{w \} $. Therefore $c_i\xi_1\to 0$ which contradicts
the assumption that $\Gamma $ is discrete. This proves (3) in the  case where  $m=1$.

Now suppose $m>1$ and   (1) implies (3) while $m$ is replaced by $m-1$. By the case for $m=1$, we know $\xi_{1,v}\neq 0$ for every
$v\in S$. So there exists $g\in \GL_n(K_S)$ such that $\xi_1g=(1, 0, \dots, 0)$. The right multiplication of
 $g$ on $K_S^n$ is a $K_S$ linear isomorphism, so we can without loss of generality assume that $\xi_1=(1, 0, \dots, 0)$.
Let $\varphi: K_S^n\to K_S^n/K_S\xi_1\cong K_S^{n-1}$
 be the natural quotient map. Since $I_S \xi_1$ is a cocompact lattice in
$K_S \xi_1$ and $\Gamma\subset K_S^n$ is discrete, the module $\varphi (\Gamma)$ is discrete and $\varphi(\xi_2),\dots, \varphi(\xi_m)$
are linearly independent over $I_S$. In view of the induction hypothesis, we have $\varphi(\xi_2),\dots, \varphi(\xi_m)$ are linear independent over $K_S$. Therefore $\xi_1,\ldots, \xi_m $ are linearly independent
over $K_S$.
\end{proof}

\begin{remark}\label{rem: hist} \new{The implication (2)$\Rightarrow$(3) holds without assuming that   $ \xi_1,\dots,  \xi_m$ belong to a discrete  $ I _S$-module, see \cite[Lemma 7.1]{KT}.} \end{remark}

For a discrete $I_S$-module  $\Gamma\subset K_S^n$
let  $K\Gamma$ (resp.~$K_S\Gamma$) be
 the    \new{$K$-linear (resp.~$K_S$-linear)}  span of $\Gamma$ in $K_S^n$.
We call the dimension over $K$ of $
K\Gamma $ the {\sl rank} of $\Gamma$.
It follows from Lemma \ref{lem: discrete module}  that the rank  of $\Gamma$  is less than or equal to $n$ and the equality holds
 if and only if
$\Gamma$  has finite covolume.

\smallskip

In the following lemma we prove a Gram-Schmidt orthogonalization  process for ultrametric
local  fields.
\begin{lemma}\label{lem: orthonormal basis}
Let $K_v$ be a ultrametric local field. For any $K_v$-linearly independent vectors $\xi_1,\dots ,\xi_m\in K_v^n$  there exist
linearly independent vectors $\eta_1,\dots, \eta_m \in K_v^n$ such that  $\eta_1,\dots, \eta_r$ are
in the $K_v$-linear span of $\xi_1,\dots, \xi_r$  for all $r\le m$, and
\begin{equation}\label{eq: orthonormal basis}
\|a_1 \eta_1+\dots+ a_m\eta_m\|_v=\max_{1\le i\le m}|a_i|_v\quad \mbox{for all}\  \  a_i\in K_v.
\end{equation}
\end{lemma}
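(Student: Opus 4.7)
The plan is to proceed by induction on $m$. For the base case $m=1$, since the sup norm on $K_v^n$ takes values in the value group $|K_v|_v$, I can pick $\lambda\in K_v^\times$ with $|\lambda|_v = \|\xi_1\|_v$ and set $\eta_1 := \lambda^{-1}\xi_1$; this is a norm-one vector in the $K_v$-span of $\xi_1$.

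For the inductive step, assume $\eta_1,\ldots,\eta_{m-1}$ satisfying the conclusion for $\xi_1,\ldots,\xi_{m-1}$ have been built. I would set
\[
c := \inf\Bigl\{\bigl\|\xi_m - \textstyle\sum_{i<m} b_i \eta_i\bigr\|_v : b_i\in K_v\Bigr\},
\]
pick $b_i^\ast$ attaining this infimum and $\lambda\in K_v^\times$ with $|\lambda|_v = c$, and define
\[
\eta_m := \lambda^{-1}\Bigl(\xi_m - \sum_{i<m} b_i^\ast \eta_i\Bigr).
\]
This produces $\|\eta_m\|_v = 1$, with $\eta_m$ in the $K_v$-span of $\xi_1,\ldots,\xi_m$; moreover $\eta_1,\ldots,\eta_m$ are linearly independent since $\xi_m\notin\mathrm{span}(\xi_1,\ldots,\xi_{m-1})=\mathrm{span}(\eta_1,\ldots,\eta_{m-1})$.

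To verify the desired identity, I would substitute the definition of $\eta_m$ to obtain
\[
\sum_{i=1}^m a_i\eta_i \;=\; (a_m/\lambda)\xi_m \;+\; \sum_{i<m}\bigl(a_i - (a_m/\lambda)b_i^\ast\bigr)\eta_i.
\]
When $a_m\neq 0$, the minimality defining $c$ combined with $K_v$-scaling yields the lower bound $\|\sum_i a_i\eta_i\|_v \ge |a_m/\lambda|_v\cdot c = |a_m|_v$. Coupled with the inductive identity $\|\sum_{i<m}a_i\eta_i\|_v = \max_{i<m}|a_i|_v$, a short case analysis on whether $|a_m|_v$ is strictly greater than, equal to, or strictly less than $\max_{i<m}|a_i|_v$ finishes the argument: in the first two cases the lower bound combined with the ultrametric upper bound $\max_i |a_i|_v$ forces equality, while in the third case the vectors $a_m\eta_m$ and $\sum_{i<m}a_i\eta_i$ have different norms, so strict equality in the ultrametric inequality gives $\|\sum_i a_i\eta_i\|_v = \max_{i<m}|a_i|_v = \max_i |a_i|_v$. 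The case $a_m=0$ is immediate from the inductive hypothesis.

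The hard part is showing that $c$ is both positive and actually attained. Positivity follows because $\mathrm{span}(\eta_1,\ldots,\eta_{m-1})$ is a closed finite-dimensional $K_v$-subspace of $K_v^n$, so $c=0$ would force $\xi_m$ into that span, contradicting linear independence. Attainment follows because $\|\xi_m-\sum b_i\eta_i\|_v$ takes values in the discrete set $\{0\}\cup q_v^{\mathbb{Z}}$, so a positive infimum is realized. These two non-archimedean features, closedness of finite-dimensional subspaces and discreteness of the value group, are exactly what substitutes for the Hilbert-space orthogonal-projection step in the usual archimedean Gram--Schmidt procedure.
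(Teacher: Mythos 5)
Your proof is correct, but it takes a genuinely different route from the paper's. The paper runs a concrete, coordinate-wise Gaussian elimination: at step $j$ it selects a coordinate position $i_j$ where $\eta'_j$ achieves its sup norm, rescales to make that entry equal to $1$, and subtracts off so that $\eta_j$ vanishes at positions $i_1,\ldots,i_{j-1}$; the identity $\|\sum a_i\eta_i\|_v=\max_i|a_i|_v$ then follows by inspecting the $i_j$-th coordinate for the smallest $j$ realizing the maximum, where by construction only $\eta_1,\ldots,\eta_j$ can contribute and the $\eta_j$ contribution dominates. You instead replace the archimedean orthogonal projection with a best-approximation argument: $\eta_m$ is a normalized nearest-point error from $\mathrm{span}(\eta_1,\ldots,\eta_{m-1})$, with existence of a minimizer guaranteed by discreteness of the value group and closedness of finite-dimensional $K_v$-subspaces, and the isometry then follows from minimality plus the ultrametric ``strict equality'' principle when the two norms differ. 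Your version is coordinate-free and makes the analogy with Gram--Schmidt transparent, at the cost of invoking completeness of $K_v$ and the discreteness of its valuation; the paper's version is more elementary and produces an explicit basis whose structure (a permuted unitriangular shape) is sometimes useful in its own right. Both are complete and correct proofs of the lemma.
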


\begin{remark}\label{rem;measure}
In the sequel we call a  basis of  $L_v\new{\df}\mathbf{span}_{K_v}\{\xi_1, \ldots, \xi_m \}$ which satisfies (\ref{eq: orthonormal basis})
an {\sl orthonormal basis} of $L_v$.  The map
\[
\varphi: K_v^m\to L_v
\quad \mbox{where} \quad
\varphi_v(a_1, \dots, a_m)=a_1 \eta_1+\dots+a_m \eta_m
\]
is an isometric embedding 
\new{sending} $\mathbf{vol}_v^m$ to $\mathbf{vol}_{L_v}$.
\end{remark}

\begin{proof}
\new{Contrary to the} archimedean case, here we
 choose an entry  with maximal absolute value for the corresponding  vector.
Write
\[
\xi_i=(x_{i1}, \dots, x_{in}), \quad \mbox{where} \quad x_{ij}\in K_v \quad{ and }\quad  1\le i\le m.
\]
First we choose $j_1\le n$ such that
$\|\xi_1\|_v=|x_{1j_1}|_v$ and set $\eta_1=x_{1j_1}^{-1}\xi_1$. Next we take
$\eta'_2 = \xi_2-x_{2j_1}\eta_1=(y_{21},\dots, y_{2n})$. We  choose $j_2\le n$ such that
$\|\eta'_2\|_v=|y_{2 j_2}|_v$ and set $\eta_2=y^{-1}_{2j_2}\eta'_2$. In general after $r$ steps we have $r$ different integers $j_1
\dots, j_{r}$ and unit norm vectors $\eta_1,\dots, \eta_{r}$ such that
$\eta_i$ has $j_{i}$-th entry $1$ and $j_s$-th entry zero for $s<i$.  We take
\[
 \eta'_{r+1}=\xi_{r+1}-x_{r+1,j_1} \eta_1-\dots x_{r+1,j_{r}}\eta_{r}=(y_1, \dots, y_n)
\]
and choose $j_{r+1}$ such that $\|\eta'_{r+1}\|_v=|y_{j_{r+1}}|_v$.
We set $\eta_{r+1}=y_{j_{r+1}}^{-1}\eta'_{r+1}$. Then it has $j_{r+1}$-th entry
$1$ and $j_s$-th entry $0$ for $s<r+1$.
This  induction process gives $m$ unit norm vectors $\eta_1,\dots, \eta_m$.

For $(a_1, \ldots, a_m)\in K_v^m$ let
\[
k=\min \{1\le r\le m: |a_{j_r}|_v=\max_{1\le i\le m} |a_i|_v  \}.
\]
It is clear from the construction that
\[
\|a_1 \eta_1+\dots+ a_m\eta_m\|_v=|a_{j_k}|_v,
\]
which proves (\ref{eq: orthonormal basis}).
\end{proof}


\new{The next lemma appeared as \cite[Corollary 8.4]{KT} for $K = \Q$ and as \cite[Corollary 5.8]{KTMP} for $K$  a number field.}

\begin{lemma}\label{lem: sum covolume}
Suppose  $\Gamma$ and $\Gamma'$ are discrete $I_S$-modules in  $K_S^n$ with \eq{empty}{K \Gamma \cap K \Gamma'=\{0\}.}
Then \begin{equation}\label{eq: inequality volume}
\mathbf{cov_r}(\Gamma+\Gamma')\le \mathbf{cov_r}(\Gamma)\mathbf{cov_r}(\Gamma').
\end{equation}
\end{lemma}

\begin{proof}
Let  $L=K_S\Gamma $, $L'=K_S\Gamma'$ and $L''=L+ L'$.
\new{In view of \equ{empty}}, Lemma \ref{lem: orthonormal basis} implies that
$L''$ is  a direct sum of $L$ and $L'$.
The right (resp.~left) hand side of (\ref{eq: inequality volume})  is the covolume of
$\Gamma+\Gamma'$ with relative  to $\mathbf{vol}_{L}\times  \mathbf{vol}_{L'}$ (resp. $\mathbf{vol}_{L''}$).
Let $L=\prod_{v\in S} L_v$ and $L'=\prod_{v\in S} L_v'$ according to (\ref{eq;decomposition}).
In view of (\ref{eq;measure}),
it suffices to prove that for each $v\in S$ there is a
positive Haar measure set $R_v$ of $L_v+L_v'$ such that
\begin{align}\label{eq;goal}
\mathbf{vol}_{L_v+L_v'} (R_v)\le (\mathbf{vol}_{L_v}\times  \mathbf{vol}_{L_v'})(R_v).
\end{align}

Let $r$ and $m$ be the rank of $L$ and $L''$ respectively.
For each $v\in S$ we choose an orthonormal  basis  $\xi_{1,v},\dots, \xi_{r,v}$ of $L_v$ and
an orthonormal basis $\xi_{r+1, v},
\dots, \xi_{m, v}$
of  $L'_v$.
 We will show that (\ref{eq;goal}) holds for
\begin{align*}
R_v:=\{  a_1 \xi_{1,v}+\ldots+a_{m}\xi_{m,v}: a_i\in B_1( K_v)  \}.
\end{align*}
For all $v\in S$
\begin{align}\label{eq;book}
(\mathbf{vol}_{L_v}\times  \mathbf{vol}_{L_v'})(R_v)= \mathbf{vol}_{v}^1 \big( B_1(K_v)\big)^{m}.
\end{align}
If $v$ is archimedean, then it is clear from Eulidean geometry (i.e.~volume of parallelotope) that
\[
\mathbf{vol}_{L_v+L_v'}(R_v)\le \mathbf{vol}_{v}^1 \big( B_1(K_v)\big)^{m}=(\mathbf{vol}_{L_v}\times  \mathbf{vol}_{L_v'})(R_v).
\]
If $v$ is ultrametric, we let
 $\eta_{1,v}, \ldots, \eta_{m,v}$ be an orthonormal basis of $L_v+L_v'$.
Then Remark \ref{rem;measure} implies that $R_v$ is contained in
\[
R_v':=\{ (a_1, \ldots, a_{m})\in B_1(K_v^{m}): a_1 \eta_1+\ldots+a_{m}\eta_m  \}.
\]
Using  Remark \ref{rem;measure} again \new{together with} (\ref{eq;book}),  we have
\[
\mathbf{vol}_{L_v+L_v'}(R_v)\le \mathbf{vol}_{L_v+L_v'}(R_v')=\mathbf{vol}_{v}^1 \big( B_1(K_v)\big)^{m}= (\mathbf{vol}_{L_v}\times  \mathbf{vol}_{L_v'})(R_v).
\]
\end{proof}

  \section{Successive minima}\label{sec: minima}
  The aim of this section is to prove Theorem \ref{thm: Minkowski}.

 \begin{lemma}\label{lem: estimate lattice}
 Let $\Gamma \subset K_S^n$ be a  discrete $I_S$-module with finite covolume
  and let $R\subset K_S^n$ be a measurable subset. Then there
  exists $\xi \in K_S^n$ such that
  \begin{align}
   \mathbf{card}\big( (\xi+R)\cap \Gamma\big)\ge  \mathbf {vol}(R)/\mathbf{cov}(\Gamma).
  \end{align}\label{eq;midterm}
 \end{lemma}

 \begin{proof}
 Let $\chi_R $ be the characteristic function of $R$, and let  $F\subset K_S^n$ be a fundamental domain \new{for}  $\Gamma$. Then
 \[
 \int_F \mathbf{card} \big((\xi+R)\cap \Gamma\big) \, \mathrm{d}\xi= \int_F \sum_{\gamma\in \Gamma}\chi_R(\gamma-\xi)d \xi
 =\mathbf{vol}(R).
 \]
 Therefore there exists $\xi\in F$ such that (\ref{eq;midterm}) holds.
 \end{proof}

 \begin{lemma}\label{lem: general Minkowski}
 Let $\Gamma\subset K_S^n$ be a discrete $I_S$-module with finite covolume.
 Let  $R_1$ be a  centrally symmetric convex  subset of
 $K_{P_0}^n$    and let  $R_2$ be a closed additive subgroup of $K_{S\smallsetminus P_0}^n$.
 Suppose $R\subset K_S^n$  is  \new{equal to} $
 R_1\times R_2
 $
 \new{with} the natural identification of $K_S^n$ with $K_{P_0}^n\times K_{S\smallsetminus P_0}^n$.
 If $\mathbf{vol}(R)>2^{n(\sigma+2\tau)} \mathbf{cov}(\Gamma)$, then $R$ contains
 a nonzero
 element of  $\Gamma$.
 \end{lemma}

 \begin{proof}
 Let  $R'=(\frac{1}{2}R_1)\times R_2$.  It follows
 from the assumption on the $\mathbf{vol}(R)$ that $\mathbf {vol}(R')> \mathbf{cov}(\Gamma)$.
 According to Lemma \ref{lem: estimate lattice}, we can find two distinct elements
  $\gamma_1, \gamma_2\in \Gamma$ and $\xi\in K_S^n$ such that $\gamma_i-\xi\in R' $ for $i=1, 2$.
  Therefore  the nonzero element $\gamma_1-\gamma_2$ \new{belongs to}
  $R$.
 \end{proof}

Recall that   $\lambda_m(\Gamma)$ $(1\le m\le n)$
 is  the $m$-th minimum of a discrete $I_S$-module  $\Gamma$, see (\ref{eq: intro minimum}).
It follows directly  from the  definition  that there \new{exist} $K$-linearly independent vectors $\xi_1,\dots,\xi_n\in \Gamma$ with
$$\|\xi_m\|=\lambda_m\text{ for all }1\le m\le n.$$
Moreover, by
Lemma \ref{lem: discrete module} these vectors are also linearly independent over $K_S$.

According to Lemma \ref{lem: general Minkowski} for any  $0<t  < \lambda_1(\Gamma)$ we have
\begin{equation}\label{eq;temp}
\Big(\prod_{v\in S\smallsetminus P_0} q_v^{-n}\Big)t^{n\sharp S}
\mathbf{vol}\big(B_1(K_S^n)\big)
{\le} \mathbf{vol}\big(B_t({K_S^n})\big)  \le 2^{n(\sigma+2\tau)}
\mathbf{cov}(\Gamma).
\end{equation}
Since  $B_1( \mathbb R^m)$ contains $ \{(x_1, \ldots, x_m)\in \mathbb R^m: -m^{-1/2}\le x_i\le m^{-1/2} \}$,
for archimedean $v\in P$
  we have
\begin{align}
\label{eq;simple}
\mathbf{vol}_v\big(B_1(K_v^n)\big)\ge 2^{n\varepsilon_v}  n^{-n\varepsilon_v/2},
\end{align}
where for complex place we use $B_1(\mathbb C^n)=B_1(\mathbb R^{2n})$.
By (\ref{eq;temp}), (\ref{eq;simple}) and (\ref{eq;ball}) we have
\begin{align}
\label{eq: Minkowski}
\lambda_1(\Gamma)^{n\sharp S}
  \le  n^{n(\sigma+2\tau)/2}\Big(\prod_{v\in S\smallsetminus P_0} q_v^{n}\Big)
\mathbf{cov}(\Gamma).
\end{align}

\begin{lemma}\label{lem: estimate}
Let $\Gamma$ be a discrete $I_S$-module with finite covolume.
Suppose that $\xi_1,\dots,\xi_n\in \Gamma$ are $K$-linearly independent vectors
and
$\|\xi_m\|=\lambda_m(\Gamma) $
for all  $1\le m\le n$.
Then there exists $g\in \GL_n(K_S)$ such that
\begin{equation}\label{eq: det cov}
\mathbf{cont}\big(\mathbf{det}(g)\big)=\prod_{i=1}^n\mathbf{cont}({\xi_i})^{-1},
\end{equation}
 and any nonzero vector of
$\Gamma'\new{\df}\Gamma g$ has norm greater than or equal to one.
\end{lemma}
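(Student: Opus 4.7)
The plan is to construct $g$ place-by-place as the composition of a norm-preserving orthonormalizing transformation with a diagonal scaling calibrated to the sizes $\|\gamma_{i,v}\|_v$.

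First, at each place $v \in S$, I would apply the appropriate orthonormalization procedure to $\gamma_{1,v},\ldots,\gamma_{n,v}$ to obtain a basis $\eta_{1,v},\ldots,\eta_{n,v}$ of $K_v^n$ with $\eta_{1,v},\ldots,\eta_{r,v}$ spanning the same subspace as $\gamma_{1,v},\ldots,\gamma_{r,v}$ for every $r$: Gram--Schmidt for archimedean $v$, and Lemma~\ref{lem: orthonormal basis} for non-archimedean $v$. Let $U_v \in GL_n(K_v)$ be the change-of-basis matrix sending the row $\eta_{i,v}$ to $e_i$; in all three cases $U_v$ preserves $\|\cdot\|_v$ and satisfies $|\det U_v|_v = 1$. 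Next, pick $d_{j,v}\in K_v^\times$ with $|d_{j,v}|_v=\|\gamma_{j,v}\|_v^{-1}$ at each place, set $D_v=\mathrm{diag}(d_{1,v},\ldots,d_{n,v})$ and $g_v=U_v D_v$, and finally $g=(g_v)_{v\in S}$. The content condition is then immediate: $|\det g_v|_v = \prod_j \|\gamma_{j,v}\|_v^{-1}$, and taking the product over $v\in S$ gives $\mathbf{cont}(\det g) = \prod_j \mathbf{cont}(\gamma_j)^{-1}$.

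For the norm condition, given any nonzero $\xi\in \Gamma$, I would write $\xi=\sum_i a_i\gamma_i$ uniquely with $a_i\in K_S$ (using linear independence over $K_S$ from Lemma~\ref{lem: discrete module}), set $j^*=\max\{i : a_i\neq 0\}$, and invoke the standard successive-minima argument: since $\xi\notin \mathbf{span}_K(\gamma_1,\ldots,\gamma_{j^*-1})$, we have $\|\xi\|\ge \iota_{j^*}$. At each place $v$, expanding $\gamma_{i,v}=\sum_{j\le i} c_{ij,v}\eta_{j,v}$ and writing $\xi_v=\sum_j b_{j,v}\eta_{j,v}$ gives $b_{j,v}=0$ for $j>j^*$, while $\xi_v g_v$ has $j$-th coordinate $b_{j,v}d_{j,v}$ in the standard basis. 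A short computation in each norm convention---using orthonormality of the $\eta_{j,v}$ to express $\|\xi_v\|_v$ in terms of the $b_{j,v}$, and then bounding each $\|\gamma_{j,v}\|_v$ from above by $\max_{k\le j^*}\|\gamma_{k,v}\|_v$---yields the uniform estimate
\[
\|\xi_v g_v\|_v \;\geq\; \frac{\|\xi_v\|_v}{\max_{k\le j^*}\|\gamma_{k,v}\|_v}.
\]
Finally, choose $v_0\in S$ with $\|\xi_{v_0}\|_{v_0}=\|\xi\|$; since $\iota_1\le\cdots\le\iota_n$ and $\|\gamma_{k,v_0}\|_{v_0}\le \|\gamma_k\|=\iota_k$, the denominator at $v_0$ is bounded above by $\iota_{j^*}$, so $\|\xi g\|\ge \|\xi_{v_0} g_{v_0}\|_{v_0}\ge \|\xi\|/\iota_{j^*}\ge 1$.

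The main technical obstacle is verifying the uniform lower bound $\|\xi_v g_v\|_v\ge \|\xi_v\|_v/\max_{k\le j^*}\|\gamma_{k,v}\|_v$ in all three norm conventions. The non-archimedean and real cases follow directly from the sup/Euclidean structure, but the complex case---where $\|\cdot\|_v$ is the \emph{square} of the Euclidean norm and $|d|_v=|d|^2$ for the usual absolute value---requires a careful reconciliation so that the single choice $|d_{j,v}|_v=\|\gamma_{j,v}\|_v^{-1}$ delivers both the correct content and the correct norm estimate simultaneously.
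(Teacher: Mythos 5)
Your proof is correct and takes essentially the same route as the paper: orthonormalize $\gamma_{1,v},\ldots,\gamma_{n,v}$ at each place (Gram--Schmidt for archimedean $v$, Lemma~\ref{lem: orthonormal basis} for non-archimedean $v$), scale by elements of absolute value $\|\gamma_{i,v}\|_v^{-1}$, verify the content equation by multiplicativity, and finish with the standard successive-minima inequality $\|\xi\|\ge\iota_{j^*}$. The only difference is cosmetic: you realize $g_v$ explicitly as a norm-preserving unitary $U_v$ times a diagonal $D_v$ and argue from $\Gamma$ to $\Gamma g$, while the paper defines $g$ implicitly by $\eta_i g=b_i^{-1}\eta_i$ and argues from $\Gamma'=\Gamma g$ back to $\Gamma$ via $g^{-1}$; the complex-place reconciliation you flagged does go through precisely because both $|\cdot|_v$ and $\|\cdot\|_v$ are squared there.
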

\begin{proof}
Suppose that  $\xi_i=(\xi_{i,v})_{v\in S}$ where $\xi_{i,v}\in K_v^n$
(the notation here  is the same as   \S \ref{sec: preliminaries}).
By Lemma \ref{lem: discrete module},  for every $v\in S$ the vectors $\xi_{1, v}, \ldots, \xi_{n,v}$
are $K_v$-linearly independent in $K_v^n$.
Using  Gram-Schmidt orthogonalization process (see Lemma \ref{lem: orthonormal basis} for the ultrametric case), for each $v\in S$ we can find
 an orthonormal basis $\eta_{1,v},\dots,\eta_{n,v}$
 such that for every $1\le m\le n$ the $K_v$-linear span of $\eta_{1,v},\dots, \eta_{m,v}$
 is the same as
 that of $\xi_{1,v},\dots, \xi_{m,v}$.
  Let $b_i=(b_{i,v})_{v\in S}\in K_S$ ($1\le i\le n $)
  such that $|b_{i,v}|_v=\|\xi_{i,v}\|_v$.
  It follows from the definition of content  that
  \begin{align}\label{eq;content}
  \mathbf{cont}(b_i)= \mathbf{cont}(\xi_i).
  \end{align}
 Since
$\eta_i\new{\df}(\eta_{i,v})_{v\in S} \ (1\le i\le n)$ is a $K_S$-basis of $K_S^n$,
there is a unique  $g\in \GL_n(K_S)$ such that $ \eta_ig=b_i^{-1}\eta_i$.
We claim that this $g$ satisfies the requirement of the lemma.

The equation (\ref{eq: det cov}) follows from
\[
\mathbf{cont}\big(\mathbf{det}(g)\big)=\mathbf{cont}( b_1^{-1}\dots b_n^{-1})=\prod_{i=1}^n\mathbf{cont}(b_i)^{-1}=
\prod_{i=1}^n\mathbf{cont}(\xi_i)^{-1},
\]
where in the last equality we use (\ref{eq;content}).
For the other  conclusion suppose  that $
\zeta=c_1 \eta_1+\dots+c_m\eta_m\in \Gamma'=\Gamma g$
where $c_i\in K_S$ and $c_m\neq 0$.
We have
\[
\zeta g^{-1}=c_1 b_1 \eta_1+ \dots+c_mb_m\eta_m\in \Gamma.
\]
Since for every $v\in S$ the basis $\eta_{1,v},\dots, \eta_{n,v}$ is orthonormal, we have
\begin{align}\label{eq;zeta}
\|\zeta g^{-1}\|\le\|\zeta\| \max_{1\le i\le m}|b_i|=\|\zeta\|\cdot\lambda_m(\Gamma).
\end{align}
On the other hand for any $1\le j\le m$, the $K_S$-linear span of  $\eta_1, \dots, \eta_j$ is the same as that of $\xi_1,\dots, \xi_j$.
Since $c_mb_m\neq 0$, Lemma  \ref{lem: discrete module} implies that
$\xi_1, \ldots, \xi_{m-1}, \zeta g^{-1}$ are $K$-linearly independent.
\new{Thus} it follows from the definition of the $m$-th minimum of $\Gamma=\Gamma' g^{-1}$
that $\|\zeta g^{-1}\|\ge \lambda_m(\Gamma)$.
This estimate together with (\ref{eq;zeta}) imply
 $\|\zeta\| \ge 1$, which  completes the proof.
 \end{proof}

\begin{theorem}\label{thm: refine}
Let $\Gamma$ be a discrete $I_S$-module with finite covolume.  Let $\xi_1,\dots, \xi_n\in\Gamma$
be $K$ linearly independent vectors with
 $\|\xi_m\|=\lambda_m(\Gamma)$
 for all $1\le m\le n$. Then we have
 \begin{align}\label{eq: precise}
\mathbf{cov}(I_S^n)^{-1}{\mathbf{cov} (\Gamma)}\le  \prod_{i=1}^n\mathbf {cont}(\xi_i)\le n^{n(\sigma+2\tau)/2}\Big( \prod_{v\in S\backslash P_0}q_v^{n}\Big){\mathbf{cov} (\Gamma)}.
 \end{align}
\end{theorem}
\begin{proof}
We first  prove the upper bound  of (\ref{eq: precise}).
\new{Suppose that}  $\Gamma'=\Gamma g$, where  $g\in \GL_n(K_S)$, satisfies the conclusion of Lemma \ref{lem: estimate}.
Then
$\lambda_1(\Gamma')\ge 1$.
Applying  (\ref{eq: Minkowski}) for   $\Gamma'$ we have
\begin{align}\label{eq;tmd}
1\le \lambda(\Gamma')^{n\sharp S}\le  n^{n(\sigma+2\tau)/2}\Big(\prod_{v\in S\smallsetminus P_0} q_v^{n}\Big)
\mathbf{cov}(\Gamma').
\end{align}
On the other hand by (\ref{eq;det}) and (\ref{eq: det cov})
\begin{align}\label{eq;tmd1}
{\mathbf{cov} (\Gamma')}=\mathbf{cov} (\Gamma)\cdot\mathbf{cont}\big(\mathbf{det}(g)\big)=
\mathbf{cov} (\Gamma)\cdot\prod_{i=1}^n \mathbf{cont}(\xi_i)^{-1}.
\end{align}
The upper bound  of (\ref{eq: precise}) follows from   (\ref{eq;tmd}) and (\ref{eq;tmd1}).

 Let $\Gamma''$ be the $I_S$-linear span of $\xi_1, \ldots, \xi_n$. Since $\Gamma''$ is a submodule of $\Gamma$, by
  Lemma \ref{lem: sum covolume}  we get
\[
\mathbf{cov}(\Gamma)\le \mathbf{cov}(\Gamma'')\le \prod_{i=1}^n\mathbf{cov_r}(I_S\xi_i)=
\mathbf{cov}(I_S^n)\cdot\prod_{i=1}^n
\mathbf{cont}(\xi_i),
\]
which implies  lower bound of (\ref{eq: precise}).
\end{proof}

To prove Theorem \ref{thm: Minkowski} we need a balance  between contents and
 norms of vectors in  $K_S^n$. The following lemma is a generalization of \cite[Lemma \new{8.6}]{KT} \new{and \cite[Lemma 5.9]{KTMP},}
 and the proof is the same.

\begin{lemma}\label{lem: balance}
For any $\xi\in K_S^n$ with $\mathbf {cont}(\xi)\neq 0$, there exists $a\in I_S^*$ such that $\|a \xi\|^{\sharp S}\asymp
\mathbf {cont} (\xi)$ where the implied constants depend on $K$ and $S$.
\end{lemma}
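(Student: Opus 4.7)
The plan is to reduce the lemma to the $S$-unit theorem (Dirichlet--Chevalley): the image of $I_S^*$ under the logarithmic embedding
\[
\mathrm{Log}\colon I_S^*\to \mathbb R^{|S|},\qquad a\mapsto (\log|a|_v)_{v\in S},
\]
is a lattice in the hyperplane $H=\{(x_v)_{v\in S}:\sum_{v\in S}x_v=0\}$, with torsion kernel. The covolume of this lattice (and in particular the diameter of any fixed fundamental domain $F\subset H$) depends only on $K$ and $S$.

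Concretely, given $\xi=(\xi_v)_{v\in S}\in K_S^n$ with $\mathbf{cont}(\xi)\neq 0$, first I would set $t_v=\log\|\xi_v\|_v$, so that $\sum_{v\in S}t_v=\log\mathbf{cont}(\xi)$. Define the target vector
\[
y_v=\tfrac{1}{|S|}\log\mathbf{cont}(\xi)-t_v,\qquad v\in S,
\]
which by construction lies in $H$. By the $S$-unit theorem there exists $a\in I_S^*$ whose logarithmic image $\mathrm{Log}(a)$ differs from $y$ by a vector in $F$; equivalently,
\[
\bigl|\log|a|_v+t_v-\tfrac{1}{|S|}\log\mathbf{cont}(\xi)\bigr|\le D\qquad(v\in S)
\]
for a constant $D$ depending only on $K$ and $S$. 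Exponentiating gives $|a|_v\|\xi_v\|_v\asymp \mathbf{cont}(\xi)^{1/|S|}$ uniformly in $v$, and taking the maximum over $v\in S$ yields $\|a\xi\|\asymp\mathbf{cont}(\xi)^{1/|S|}$, which is the desired estimate after raising to the $|S|$-th power.

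The only substantive ingredient is therefore the $S$-unit theorem for global fields, which is standard (see e.g.\ \cite{CF}); the uniform boundedness of $D$ in terms of $K$ and $S$ is immediate from compactness of the fundamental domain $F$. The mild subtlety is the function field case, where $I_S^*$ contains the (finite) group of constants as its torsion part, but this does not affect the argument: we only use that $\mathrm{Log}(I_S^*)$ is a lattice in $H$. No obstacle of real substance arises; the main thing to be careful about is keeping track of the fact that $\mathbf{cont}(a)=1$ for $a\in I_S^*$, which is exactly what ensures $\mathrm{Log}(a)\in H$ and makes the construction compatible with the product formula.
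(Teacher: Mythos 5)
Your proof is correct and takes essentially the same approach as the paper: both reduce the lemma to the $S$-unit (Dirichlet) theorem and the resulting cocompactness of $I_S^*$ in the norm-one hypersurface, with a fixed compact fundamental domain supplying the uniform constant. The only difference is cosmetic — you work additively in logarithmic coordinates via $\mathrm{Log}$, while the paper works multiplicatively with $\varphi(a)=(|a|_v)_v$ in $\{r_1\cdots r_m=1\}$ — and these are identified by componentwise exponentiation.
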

\begin{proof}
Suppose that $S=\{v_1, \dots, v_m\}$ where $m=\mathbf{card} (S)$.
Let $\mathbb R_+$ be the multiplicative group of positive real numbers.
 We define a   map
\[
\varphi: K\to \mathbb R^m_+ \quad \mbox{by}\quad
\varphi(a)=( |a|_{v_1},\dots, |a|_{v_m}).
\]
Let
\[
H=\{(r_1,\dots, r_m)\in \mathbb R^m_+: \prod_{i=1}^m  r_i^{\varepsilon_{v_i}}=1 \}.
\]
It follows from Dirichlet's unit theorem (see \cite[Chapter II \S 18]{CF}) that the group
$\varphi(I_S^*)\subset H$
is a cocompact lattice in $H$.  So there exists $A\ge 1$
which  depends on $K$ and $S$ such that for any  $(r_1,\dots, r_m)\in H$
we can find $a\in I_S^*$ with
\begin{equation}\label{eq: balance 1}
A^{-1}\le  r_i |a|_{v_i}\le A.
\end{equation}

Suppose  $\xi=(\xi_{v})_{v\in S}$. It follows from \new{the} definition that
 \[
 ( {\|\xi_{v_1}\|_{v_1}} \cdot  \mathbf {cont}(\xi)^{-1/(m+\tau)}, \ldots, {\|\xi_{v_m}\|_{v_m}} \cdot  \mathbf {cont}(\xi)^{-1/(m+\tau)} )\in H.
 \]
By (\ref{eq: balance 1}) \new{one} can find $a\in I_S^*$ such that for all $1\le i\le m$
\[
A^{-1}\le {\|\xi_{v_i}\|_{v_i}}{\mathbf {cont}(\xi)^{-1/(m+\tau)}}|a|_{v_i}\le A.
\]
Therefore
\[
A^{-m-\tau}\mathbf {cont}(\xi)\le \|a\xi \|^{m+\tau}\le A^{m+\tau} \mathbf {cont}(\xi).
\]
\end{proof}

\begin{proof}[Proof of Theorem \ref{thm: Minkowski}]
Let $\xi_1,\dots, \xi_n\in \Gamma$
be $K$-linearly independent vectors with $\|\xi_i\|=\lambda_i(\Gamma)$.
By  Theorem \ref{thm: refine}
\begin{align}\label{eq;easy}
\prod_{i=1}^n\mathbf{cont}(\xi_i)\asymp \mathbf{cov}(\Gamma),
\end{align}
where the implied constants depend on $K, S$ and $n$.
The definitions of  content and norm  imply
\begin{align}\label{eq;easy1}
\mathbf{cont}(\xi_i)\le \lambda_i(\Gamma)^{\sharp S}.
\end{align}
According to  Lemma \ref{lem: balance} there exists $a_1, \ldots, a_n\in I_S^*$
such that
 \begin{align}\label{eq;easy2}
\mathbf{cont}(\xi_i)\gg \|a_i\xi_i\|^{\sharp S},
 \end{align}
 where the implied constant depends on $K$ and $S$.
Note that \new{elements}  $a_1\xi_1, \ldots, a_n\xi_n\in \Gamma$ are linear independent over $K$.
So the  definition
of successive  minima implies
\begin{align}\label{eq;easy3}
\prod_{i=1}^n \lambda_i(\Gamma)\le \prod_{i=1}^n\|a_i\xi_i\|.
\end{align}
Therefore the conclusion of Theorem \ref{thm: Minkowski} follows from (\ref{eq;easy}),
(\ref{eq;easy1}), (\ref{eq;easy2}) and (\ref{eq;easy3}).
\end{proof}

\section{Mahler's compactness criterion}

Let $X=\SL_n(I_S)\backslash \SL_n(K_S)$. There is a one-to-one correspondence between
$X$ and
\[
\{I_S^n g: g\in \SL_n(K_S) \}
\]
via the map $  \SL_n(I_S)g\to I_S^n g   $.
In this section $\mathbf{e}_1, \ldots, \mathbf {e}_n$ denotes the standard basis of $K_S^n$,
i.e.~$\mathbf{e}_i$ has $i$-th entry $1$ and other entries $0$.
Before proving Theorem  \ref{thm: intro main}
we need  the following lemma. \new{See \cite[Corollary \new{8.6}]{KT} for $K = \Q$ and  \cite[Corollary 5.1]{KTMP} for $K$ a number field.}

\begin{lemma}\label{lem: finite}
Let  $M>0$.
Then there are only finitely many  $I_S$-submodules $\Gamma$ of $I_S^n$
such that $\mathbf{card}(I_S^n/\Gamma)\le M$.
\end{lemma}

\begin{proof}
Let $\Gamma\subset I_S^n$ be an $I_S$-submodule with $\mathbf{card}(I_S^n/\Gamma)\le M$.
 For every $1\le i\le n$ there is an ideal $J_i$ of $I_S $ such that
\[
I_S \mathbf e_i \cap \Gamma= J_i \mathbf e_i\quad  \mbox{and }\quad \mathbf{card} (I_S/J_i)\le M.
\]
Therefore
\[
J_1\times \dots \times J_n\subset  \Gamma \subset I_S^n.
\]
Note that $I_S$ is a Dedekind domain. It follows from the structure theory of ideals in $I_S$ that
 there are only finitely many ideals $J$ in $I_S$ such that $\mathbf{card} (I_S/J)\le M$. So the
conclusion of the lemma holds.
\end{proof}

\begin{proof}[Proof of Theorem \ref{thm: intro main}]
Let $\pi: \SL_n(K_S)\to X$ be  the natural quotient  map and let
\begin{equation}\label{eq: Mahler set}
r=\inf \{\|\xi g\|: \xi \in I_S^n,\  \xi\neq 0, \ g\in \SL_n(K_S),\  \pi(g)\in R  \}.
\end{equation}

Suppose  $R$ is relatively compact. There exists a relatively compact subset  $F\subset \SL_n(K_S)$  with
$\pi(F)=R$. 
\new{Therefore} there exists  $C>0$ such that
\begin{align}\label{eq;compact}
\|\xi g\|\le C\|\xi\| \quad \mbox{for every}\ \  \xi\in K_S^n \ \  \mbox{and}\ \  g\in F.
\end{align}
The discreteness of $\Gamma$ and (\ref{eq;compact}) imply $r>0$.

Now we assume $r>0$  and   prove that  $R$ is relatively  compact.
Let $\{g_i\}_{i \ge  1}$   be a  sequence in $\pi^{-1}(R)$.
It suffices to show that there exists $g\in \SL_n(K_S)$   such that
$\pi(g) $ is a limit point of a  subsequence of  $\{\pi(g_i) \}_{i\ge 1}$.
By Theorem \ref{thm: Minkowski}
 there exists  $C \ge 1$ such that  for   any free $I_S$-module  $\Gamma\in R$  one has
\begin{equation}\label{eq: Mahler 0}
r \le \lambda _1(\Gamma)\le \lambda_n(\Gamma)\le C.
\end{equation}

For every  $i\ge 1$ let $\xi_1^{(i)}, \dots,
\xi_n^{(i)} \in I_S^n$
be $K$-linearly independent  vectors  such that    $\|\xi_j^{(i)}g_i\|$ equals to the $j$-th minimum of
$I_S^n g_i$.
By (\ref{eq: Mahler 0}) we have
\begin{align}\label{eq;beijing}
\|\xi_j^{(i)}g_i\|\le C\quad \forall\  i\ge 1\mbox{ and }1\le j\le n.
\end{align}
Let
$$\Gamma_i=\mathbf{span}_{I_S}\{\xi_j^{(i)}: 1\le j\le n\}.$$

According to  (\ref{eq: Mahler 0}) and Theorem \ref{thm: Minkowski} there exists $M>0$ such that
 \begin{equation}\label{eq: Mahler 1}
\mathbf{cov}(I_S^n)\le \mathbf{cov}(\Gamma_i)= \mathbf{cov}(\Gamma_i g_i)\le M\quad \forall \  i\ge 1.
\end{equation}
By Lemma \ref{lem: finite} and (\ref{eq: Mahler 1}), the set $\{\Gamma_i: i \ge  1\}$ is finite.
Therefore by possibly passing to a subsequence we may assume that  there exists
$h\in \GL_n(K)$
  such that
   $\Gamma_{i}=I_S^nh$ for  all $i\ge 1$.
It follows that there  is a sequence $\{f_i\}_{i\ge 1}$ in $\GL_n(I_S)$ such that  $\mathbf{e}_j f_i h=\xi_j^{(i)}$ for all $i\ge 1$
and $1\le j\le n$.
By (\ref{eq;beijing}) there is a subsequence $\{ g_{i_k} \}_{k\ge 1}$ of $\{g_i \}_{i\ge 1}$
and  $g\in \GL_n(K_S) $ such that
\begin{align}\label{eq;beijing1}
f_{i_k}h g_{i_k}\to g \quad \mbox{as}\quad k\to \infty.
\end{align}
Since $\mathbf{det}$ is continuous and  $I_S$ is discrete in $K_S$, for $k$ sufficiently large we have
$\mathbf{det}(f_{i_k})=\mathbf{det}(f_{i_{k+1}})\in I_S^*$. Therefore by possibly passing to a subsequence
and multiplying the first  row of  $h$  by some element of $I_S^*$, we assume without loss of generality that
$f_{i_k}\in \SL_n(I_S)$ for all $k$.

The group  $ h^{-1}\SL_n(I_S)h\cap \SL_n(I_S)$ has finite index in
$h^{-1}\SL_n(I_S)h$.  So by possibly passing to a subsequence
we can find $ f\in h^{-1}\SL_n(I_S)h$ and a sequence $\{h_k \}_{k\ge 1}$ of $\SL_n(I_S)$ such that
\begin{align}\label{eq;beijing2}
h^{-1}f_{i_k} h = fh _k\quad \forall\   k\ge 1.
\end{align}
By (\ref{eq;beijing1}) and (\ref{eq;beijing2}) we have
$h f h_k g_{i_k}\to g$ as $k\to \infty$.
Therefore $$h_k g_{i_k}\to f^{-1}h^{-1} g\text{ as }k\to \infty.$$
Since $h_k\in \SL_d(I_S)$ we have
$\pi(g_{i_k})\to \pi (f^{-1}h^{-1} g)$ as $k\to \infty$.
\end{proof}

\end{document}